\begin{document}
\newcommand{\Q}{{\mathbb Q}}
\newcommand{\C}{{\mathbb C}}
\newcommand{\R}{{\mathbb R}}
\newcommand{\Z}{{\mathbb Z}}
\newcommand{\F}{{\mathbb F}}
\renewcommand{\wp}{{\mathfrak p}}
\renewcommand{\P}{{\mathbb P}}
\renewcommand{\O}{{\mathcal O}}
\newcommand{\Pic}{{\rm Pic\,}}
\newcommand{\Ext}{{\rm Ext}\,}
\newcommand{\rank}{{\rm rk}\,}
\newcommand{\sbull}{{\scriptstyle{\bullet}}}
\newcommand{\bX}{X_{\overline{k}}}
\newcommand{\ch}{\operatorname{CH}}
\newcommand{\tors}{\text{tors}}
\newcommand{\cris}{\text{cris}}
\newcommand{\alg}{\text{alg}}
\newcommand{\tX}{{\tilde{X}}}
\newcommand{\tL}{{\tilde{L}}}
\newcommand{\Hom}{{\rm Hom}}
\newcommand{\spec}{{\rm Spec}}
\let\isom=\simeq
\let\rk=\rank
\let\tensor=\otimes
\newcommand{\X}{\mathfrak{X}}
\newcommand{\mydot}{{\small{\bullet}}}
\let\hom=\Hom

\newtheorem{theorem}[equation]{Theorem}      
\newtheorem{lemma}[equation]{Lemma}          %
\newtheorem{corollary}[equation]{Corollary}  
\newtheorem{proposition}[equation]{Proposition}
\newtheorem{scholium}[equation]{Scholium}

\theoremstyle{definition}
\newtheorem{conj}[equation]{Conjecture}
\newtheorem*{example}{Example}
\newtheorem{question}[equation]{Question}

\theoremstyle{definition}
\newtheorem{remark}[equation]{Remark}

\numberwithin{equation}{subsection}

\renewcommand{\t}[1]{\tilde{#1}}
\newcommand{\gpb}[1]{(\t{#1},F_i(\t{#1}))}
\title{Vectors bundles with theta divisors I
\\ Bundles on Castelnuovo curves}
\author{Kirti Joshi and V.~B.~Mehta}
\address{Math. department, University of Arizona, 617 N Santa Rita, Tucson
85721-0089, USA.} \email{kirti@math.arizona.edu}
\address{School of Mathematics, Tata Institute of Fundamental Research,
Mumbai,  India.} \email{vikram@math.tifr.res.in}
\date{Version: Aug 2, 2007}


\begin{abstract}
In this paper we show that semistable vector bundles
on a Castelnuovo curve of genus $g\geq 2$ have theta
divisors. As a corollary, we deduce that semistable
vector bundles on a smooth, general curve of genus
$g\geq 2$ which extend to semistable vector bundles on any
Castelnuovo degeneration of the general curve admit a
theta divisor.
\end{abstract}
\maketitle

\section{Introduction}
\subsection{The classical case}
Let $C$ be a smooth, projective curve of genus $g\geq
2$. We say, following \cite{raynaud82}, that a vector
bundle $V$ on $C$ with $\mu(V)=\deg(V)/\rk(V)=0$, has
a theta divisor if $\hom(L_{gen},V)=0$ for a general
line bundle $L_{gen}$ of degree $1-g$. When such a
line bundle exists, the set of line bundles for which
$\hom(L,V)\neq 0$ forms a divisor in $\Pic^{1-g}(C)$,
which is algebraically equivalent to $\rk(V)\theta$
where $\theta$ is the classical theta divisor of
$\Pic^{1-g}(C)$. It is easy to see that if $V$ has a
theta divisor then $V$ is semistable. It was shown in
\cite{raynaud82} that there are stable vector bundles
on a smooth, projective curve of genus $g\geq 2$
which do not have a theta divisor. Raynaud also
showed that if $\rk(V)=2$ then every semistable
vector bundle has a theta divisor and if $\rk(V)=3$
then the assertion continues to hold if $X$ is
generic. It is well-known that locus of semi-stable vector bundles with theta divisors is open in the moduli of semi-stable vector bundles (see \cite{raynaud82}). 
But at the moment we do not have any concrete description of this locus. 
In the present note we provide an open set of the moduli of
semi-stable vector bundles (on a generic curve) which have theta divisors. In this note we prove the following theorems.

\begin{theorem}\label{main1}
Suppose $\X\to\spec(k[[t]])$ is a flat, proper family
of curves with smooth generic fibre $\X_\eta$ and the
special fibre $\X_0=X$ is a Castelnuovo curve of
arithmetic genus $g\geq 2$.  Suppose $V$ is a
semistable vector bundle on $X_\eta$ of degree zero
which extends as a semistable vector bundle on $\X_0$.
Then $V$ has a theta divisor.
\end{theorem}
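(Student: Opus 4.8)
The plan is to reduce everything to the Castelnuovo special fibre, where the paper's first main result (the theorem announced in the abstract, that semistable bundles on a Castelnuovo curve of genus $g\geq 2$ have theta divisors) already applies, and then to transport the resulting vanishing to the generic fibre by a properness-and-semicontinuity argument on a relative compactified Jacobian. Write $S=\spec(k[[t]])$ with closed point $0$ and generic point $\eta$, let $\tilde V$ be the semistable extension of $V$ to $\X$, and set $W=\tilde V|_{\X_0}=\tilde V|_X$; by hypothesis $W$ is semistable of degree $0$ on the Castelnuovo curve $X$ of arithmetic genus $g$. Recall that $V$ has a theta divisor precisely when there exists a single line bundle $L$ of degree $1-g$ on $X_\eta$ with $\hom(L,V)=0$, so the whole problem is to produce one such $L$.

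First I would invoke the Castelnuovo case: $W$ has a theta divisor on $X$, i.e. there is a rank-one torsion-free sheaf $\mathcal L_0$ of the appropriate degree $1-g$ on $X$ with $\hom(\mathcal L_0,W)=0$, and the locus of such sheaves for which $\hom(\,\cdot\,,W)\neq 0$ is a proper subvariety of the compactified Jacobian of $X$. Next I would set up the relative compactified Jacobian $\pi\colon \bar J\to S$ of relative degree $1-g$, proper over $S$, whose generic fibre is $\Pic^{1-g}(X_\eta)$ and whose special fibre parametrizes rank-one torsion-free sheaves on $X$. Pulling $\tilde V$ back to $\bar J\times_S\X$ and applying upper semicontinuity of $h^0$ to the relative sheaf $\mathcal{H}om(\mathcal L,\tilde V)$, the relative theta locus $\Theta=\{\,\xi\in\bar J:\hom(\mathcal L_\xi,\tilde V_{\pi(\xi)})\neq 0\,\}$ is closed in $\bar J$.

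The argument then concludes as follows. Let $\bar J^{\circ}\subset\bar J$ be an irreducible component dominating $S$, so that $\bar J^{\circ}_0$ is a flat limit of $\Pic^{1-g}(X_\eta)$; by the Castelnuovo case I would choose $\mathcal L_0\in \bar J^{\circ}_0$ with $\hom(\mathcal L_0,W)=0$, i.e. $\mathcal L_0\notin\Theta$. Since $\bar J^{\circ}\to S$ is proper, flat and dominant, after a finite (possibly ramified) base change there is a point $L$ of the generic fibre specializing to $\mathcal L_0$; because $\Theta$ is closed and $\mathcal L_0\notin\Theta$, we get $L\notin\Theta$, that is $\hom(L,V)=0$. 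As $X_\eta$ is smooth, $L$ is a genuine line bundle, and its degree is $1-g$ by flatness, so $V$ has a theta divisor (the harmless base extension is removed by noting that $\hom(L,V)=0$ is an open, field-extension-insensitive condition). The step I expect to be the main obstacle is matching the two notions of Jacobian at the boundary: one must guarantee that the theta divisor furnished by the Castelnuovo case meets the flat-limit component $\bar J^{\circ}_0$ in a proper subset, equivalently that $\mathcal L_0$ can be taken among limits of honest degree-$(1-g)$ line bundles from $X_\eta$. This is exactly where the explicit structure of Castelnuovo curves and of their (compactified) Jacobians must be used, and where one must check that extending $\tilde V$ does not force the entire limit component into $\Theta$.
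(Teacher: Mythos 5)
Your proposal has the same skeleton as the paper's proof---restrict to the Castelnuovo fibre, invoke the theorem that semistable bundles there have theta divisors, then transport the vanishing $\hom(L,W)=0$ to the generic fibre by specialization---but your transport mechanism is far heavier than the paper's, and the step you yourself flag as ``the main obstacle'' is exactly where the paper's argument does all the work. The paper's proof is two lines: since $X$ is a curve, $\Ext^2(\O_X,\O_X)=0$, so the line bundle $L$ furnished by the Castelnuovo case deforms unobstructedly over $\spec(k[[t]])$ and yields $L_\eta$ on $\X_\eta$; then upper semicontinuity for the single flat family $\tL^\vee\tensor\tilde V$ over the DVR gives $\hom(L_\eta,V_\eta)\leq \hom(L,V)=0$. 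You instead work on the relative compactified Jacobian, which imports genuine technicalities you would still need to discharge: a Poincar\'e sheaf on $\overline{J}$ exists only \'etale-locally; semicontinuity of $\hom$ between two flat families (with $\mathcal{L}$ torsion-free, not locally free, on the boundary) is less standard than semicontinuity of $h^0$ of one flat sheaf; and your specialization step requires knowing that the theta-free point lies in the closure of the generic fibre.

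That last step, left open in your write-up, can be closed in two ways. The expensive way: by Altman--Iarrobino--Kleiman, the compactified Jacobian of an integral curve with planar singularities (in particular a nodal Castelnuovo curve) is irreducible, so the special fibre of $\overline{J}$ is a single component, is the flat limit of $\Pic^{1-g}(\X_\eta)$, and contains $\Pic^{1-g}(X)$ as a dense open. The cheap way: observe that the paper's corollary for Castelnuovo curves produces an honest \emph{line bundle} $L$, not merely a rank-one torsion-free sheaf, and line bundles on the special fibre of a family of curves always lift because the obstruction space $\Ext^2(\O_X,\O_X)=H^2(X,\O_X)$ vanishes---which is the paper's route and makes the compactified Jacobian unnecessary. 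Your finite base change is likewise avoidable: formal smoothness of the Picard functor over the complete DVR lifts the $k$-point $\mathcal{L}_0$ directly to an $S$-point, so no descent argument for the vanishing of $\hom$ is needed. In short: your approach is viable once the flagged gap is filled, but filling it by the simplest available means collapses your proof into the paper's.
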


Recall that a projective, irreducible nodal curve $X$ of
arithmetic genus $g$ and with $g$ nodes is called a Castelnuovo
curve of genus $g$. By the results of \cite{deligne69} we know
that a smooth, general curve of genus $g$ admits a Castelnuovo
degeneration. Thus the above theorem has the following corollary.

\begin{corollary}\label{DM}
Let $X/k$ be a general smooth, projective curve of
genus $g\geq 2$. Let $V$ be a semistable vector bundle on $X$
which extends to a semistable vector bundle on some
Castelnuovo degeneration of $X$. Then $V$ admits a
theta divisor.
\end{corollary}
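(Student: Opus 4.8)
The plan is to reduce the corollary to Theorem~\ref{main1} with essentially no additional geometric work, the substance being the recognition that the hypothesis supplies precisely the input that Theorem~\ref{main1} requires. First I would recall, following \cite{deligne69}, why the class of bundles under consideration is meaningful: since $X$ is a general smooth projective curve of genus $g\geq 2$, the irreducibility of the moduli of stable curves guarantees that $X$ can be realized as the smooth generic fibre of a flat, proper family $\X\to\spec(k[[t]])$ whose special fibre $\X_0$ is a Castelnuovo curve. Because arithmetic genus is constant in a flat family, $\X_0$ has arithmetic genus $g\geq 2$; this is exactly what it means to say that $X$ admits a Castelnuovo degeneration.

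Next I would unwind the hypothesis on $V$. By assumption $V$ is a semistable bundle on $X$ that extends to a semistable bundle on the special fibre of some such Castelnuovo degeneration; I would fix this degeneration and name it $\X\to\spec(k[[t]])$, so that $\X_\eta\isom X$ and $\X_0$ is a Castelnuovo curve of arithmetic genus $g\geq 2$. Since the very notion of a theta divisor is defined only for slope-zero bundles, $V$ has degree zero on $\X_\eta$, and by hypothesis its extension is semistable on $\X_0$. Thus the family $\X$ together with the bundle $V$ satisfy, verbatim, the hypotheses of Theorem~\ref{main1}.

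Applying Theorem~\ref{main1} to this family and this bundle then yields that $V$ has a theta divisor, which is the assertion to be proved. I do not expect a genuine obstacle here: all of the geometric difficulty has been absorbed into Theorem~\ref{main1}, and the only point requiring care is the bookkeeping of identifying the degeneration named in the hypothesis with a family of the type to which \cite{deligne69} and Theorem~\ref{main1} apply. Because the hypothesis already furnishes such a degeneration together with the semistable extension of $V$, this identification is immediate and the corollary follows at once.
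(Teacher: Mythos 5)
Your proposal is correct and follows exactly the paper's own route: invoke \cite{deligne69} to realize the general curve as the generic fibre of a family with Castelnuovo special fibre, then apply Theorem~\ref{main1} verbatim to that family and the given semistable extension of $V$. Your remark that $\deg(V)=0$ is implicit in the very notion of a theta divisor is a reasonable reading of the paper's convention, so there is nothing to add.
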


\subsection{Theta divisors for semistable bundles on Castelnuovo curves}
Theorem~\ref{main1} is proved by studying vector bundles on a
Castelnuovo curve $X$. By the semi-continuity theorem it is clear
that to prove the Theorem~\ref{main1} it is sufficient to prove
the following theorem.

\begin{theorem}\label{main2}
Let $X$ be a Castelnuovo curve of genus $g\geq 2$.
Then every semistable vector bundle on $X$ with
$\deg(V)=0$ has a theta divisor.
\end{theorem}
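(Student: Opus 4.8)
The plan is to exploit the defining feature of a Castelnuovo curve: since $X$ is irreducible with $g$ nodes and $p_a(X)=g$, its normalization $\nu\colon\tX\to X$ has geometric genus $g-g=0$, so $\tX\isom\P^1$. Write the nodes as $x_1,\dots,x_g$ with $\nu^{-1}(x_i)=\{p_i,q_i\}\subset\P^1$, and recall that a line bundle on $X$ is a bundle on $\P^1$ together with gluing data at the $2g$ branch points; in particular $\Pic^0(X)\isom(\mathbb{G}_m)^g$ and $\Pic^{1-g}(X)$ is a torsor under it. Since the locus $\{L:\hom(L,V)\neq0\}$ is closed, it suffices to produce a single line bundle $M$ of degree $g-1$ with $H^0(X,V\tensor M)=0$ (then $M^{-1}$ serves as a general $L_{gen}$). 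For any bundle $W$ on $X$ the normalization sequence $0\to W\to\nu_*\nu^*W\to\bigoplus_i W_{x_i}\to0$ gives, with $W=V\tensor M$ and $r=\rk V$,
\[ 0\to H^0(X,V\tensor M)\to H^0(\P^1,\nu^*V(g-1))\xrightarrow{\ \psi_M\ }\bigoplus_{i=1}^g (V\tensor M)_{x_i}\to H^1(X,V\tensor M)\to\cdots \]
using $\nu^*M\isom\O_{\P^1}(g-1)$; the target has dimension $gr$ and $\chi(V\tensor M)=0$.

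Write $\nu^*V=\bigoplus_{j=1}^r\O_{\P^1}(a_j)$ with $\sum_j a_j=\deg V=0$. Then $h^0(\P^1,\nu^*V(g-1))=\sum_j\max(0,a_j+g)\ge gr$, with equality precisely when $a_j\ge -g$ for all $j$; this equality is exactly what makes $\psi_M$ a square $gr\times gr$ matrix, so that $H^0(X,V\tensor M)=\ker\psi_M$. First I would establish the bound from semistability. Let $Q$ be the minimal–slope quotient of $\nu^*V$ in its Harder–Narasimhan filtration, of rank $s$ and slope $a_{\min}=\min_j a_j$. Adjunction yields a nonzero map $V\to\nu_*Q$ whose image $\bar Q$ is a torsion-free rank-$s$ quotient of $V$, so $\deg\bar Q\ge0$ by semistability. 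On the other hand $\bar Q\subseteq\nu_*Q$ forces $\deg\bar Q\le\deg_X\nu_*Q$, and $\chi(\nu_*Q)=\chi(Q)$ together with Riemann–Roch on $X$ gives $\deg_X\nu_*Q=s(a_{\min}+g)$; hence $a_{\min}\ge -g$, as needed (the symmetric argument with subbundles gives $a_{\max}\le g$). I would also reduce to $V$ \emph{stable}: if $0\to V'\to V\to V''\to0$ with $V',V''$ semistable of slope $0$, then $h^0(V\tensor M)\le h^0(V'\tensor M)+h^0(V''\tensor M)$, so theta divisors for $V'$ and $V''$ produce one for $V$, and Jordan–Hölder reduces to the stable factors.

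It remains to show $\ker\psi_M=0$ for some $M$. Fixing $\nu^*V(g-1)$ and the points $p_i,q_i$, the only datum in $M\in\Pic^{g-1}(X)$ is the tuple of gluing scalars $t=(t_1,\dots,t_g)\in(k^\times)^g$, and up to fixed isomorphisms $\psi_t$ sends a section $s$ to $\bigl(\mathrm{ev}_{p_i}(s)-t_i\,\tau_i\,\mathrm{ev}_{q_i}(s)\bigr)_i$, where $\tau_i$ is the fixed gluing of $V$ at $x_i$. Thus $\det\psi_t$ is a Laurent polynomial in $t$, and \emph{$V$ has a theta divisor if and only if this polynomial is not identically zero}. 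I would prove non-vanishing by imposing the $g$ descent conditions one node at a time: with $H_0=H^0(\P^1,\nu^*V(g-1))$ and $H_i=\{s\in H_{i-1}:\mathrm{ev}_{p_i}(s)=t_i\tau_i\,\mathrm{ev}_{q_i}(s)\}$, each step cuts the dimension by the rank of $s\mapsto\mathrm{ev}_{p_i}(s)-t_i\tau_i\,\mathrm{ev}_{q_i}(s)$, which equals $r$ for generic $t_i$ as soon as $\mathrm{ev}_{p_i}$ (or $\mathrm{ev}_{q_i}$) restricted to $H_{i-1}$ surjects onto the fibre $k^r$; if this holds at every stage then $\dim H_g=gr-gr=0$.

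The hard part will be exactly this surjectivity at each stage, and it is where semistability of $V$ must enter in an essential, not merely numerical, way: the bound $a_j\ge -g$ alone does not prevent a boundary summand $\O_{\P^1}(-g)$ from contributing a fibre direction that no section reaches, nor does it control how the successively restricted spaces $H_{i-1}$ sit relative to the remaining fibres. My expectation is that the cleanest way to manage this is induction on the number of nodes: degenerating one gluing parameter $t_i\to0$ or $\infty$ opens the node $x_i$ and replaces $X$ by a Castelnuovo curve of genus $g-1$, to which the statement already applies. The genuine difficulty, and the crux of the whole argument, is to verify that the bundle induced on the partial normalization remains semistable so that the inductive hypothesis can be invoked, and to treat the boundary slopes $a_j=\pm g$ at which the relevant evaluation maps degenerate.
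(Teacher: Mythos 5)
Your setup is sound, and it is essentially the paper's own translation written out in coordinates: your square matrix $\psi_t$ is exactly the $\hom$-condition for generalized parabolic bundles (GPBs) on $\t X=\P^1$ used in the paper, and your Harder--Narasimhan/pushforward bound $a_{\min}\geq -g$ plays the role of the paper's ``zero lemma'' (Proposition~\ref{non-vanishing}), proved there more directly by factoring a map $\t L=\O_{\P^1}(1-g)\to \t V$ that vanishes at $d$ of the $2g$ branch points through $\O_{\P^1}(1-g+d)$ with its induced GPB structure and comparing GPB slopes. But your argument stops exactly at its heart: you never prove $\det\psi_t\not\equiv 0$, and you say so yourself. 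The one idea you are missing --- the entire content of the paper's Theorem~\ref{main4} --- is that no induction and no degeneration are needed. Because $V$ is locally free, its descent datum at the $i$-th node is an \emph{isomorphism} $A_i:\t V_{a_i}\to \t V_{b_i}$, and (after fixing bases of the fibres) one simply chooses each scalar gluing $t_i=\lambda_i$ outside the finite spectrum of $A_i$. If a nonzero GPB map $\gpb{L}\to\gpb{V}$ existed, then since $\lambda_i\neq 0$ and $A_i$ is invertible the underlying section vanishes at $a_i$ if and only if it vanishes at $b_i$, so its vanishing locus is a union of node-pairs; by the zero lemma it has at most $2g-1$ points, hence at most $2g-2$, so some pair $\{a_i,b_i\}$ survives, and there the commuting square of parabolic data forces the spectral condition $\det(A_i-\lambda_i I)=0$, contradicting the choice. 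One explicit $t$ suffices; genericity enters only through avoiding finitely many scalars at each node.

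Beyond being unfinished, your two fallback devices are genuinely problematic. The Jordan--H\"older reduction to stable $V$ exits the category of bundles: on a nodal curve the stable factors of a semistable bundle are a priori only torsion-free, and --- as the paper recalls in the introduction, citing Raynaud --- there exist \emph{stable} torsion-free, non-locally-free sheaves on Castelnuovo curves with no theta divisor, so you cannot invoke the statement for the factors. The node-opening induction $t_i\to 0$ has the same defect: the limit of $M$ is a rank-one torsion-free sheaf that is not locally free (type $T$ in the paper's terminology), not a line bundle on the genus-$(g-1)$ partial normalization, and the pullback of $V$ to that partial normalization need not be semistable, so the inductive hypothesis is unavailable; together with the degenerate evaluation maps at the boundary slopes $a_j=-g$ that you yourself flag, this is precisely why such a route is much harder than the problem. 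Note where the two hypotheses actually enter the paper's proof: semistability only through the zero lemma, local freeness only through invertibility of $A_i$; that division of labor is what makes the one-line eigenvalue-avoidance choice possible, and it is the step your proposal lacks.
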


\subsection{Theta divisors for generalized parabolic bundles}
Vector bundles and more generally torsion-free
sheaves on a Castelnuovo curve $X$ can be described
as bundles on the normalization $$\pi:\t X=\P^1\to
X$$ with additional structures. There are several
such descriptions available (see
\cite{raynaud82,seshadri82,bhosle92,bhosle96}). The
description we will use is the one developed in
\cite{bhosle92,bhosle96}. In \cite{bhosle92}
torsion-free sheaves on a Castelnuovo curve are
described in terms of \emph{generalized parabolic
bundles}. The advantage of this description is that
the correspondence between bundles on the
normalization  $\t X$ (with additional structures)
and bundles on $X$ preserves degrees of the
underlying bundles. The notion of stability,
semistability of bundles extends easily to
generalized parabolic bundles. Vector bundles on a
Castelnuovo curve $X$ arise from special type of
generalized parabolic bundles which we will call
\emph{generalized parabolic bundles of type $B$}. Any
generalized parabolic bundle of type $B$ gives rise
to a unique (up to isomorphism) vector bundle on $X$
and conversely; moreover semi-stability (resp.
stability) is preserved under this correspondence.
Theorem~\ref{main2} follows from the following

\begin{theorem}\label{main3}
Let $a_i\neq b_i$ be $g\geq 2$ pairs of points on
$\P^1$. Let $\gpb V$ be a generalized parabloic
bundle of type $B$ with generalized parabolic
structures at $\{a_i,b_i\}_{1\leq i\leq g}$. Assume
$\gpb V$ is semistable with $\mu(V)=0$. Then  there
exists a line bundle $\t L$ of degree $1-g$ with a
generalized parabolic structure $\gpb{L}$ of type $B$
such that
$$\hom(\gpb{L},\gpb V)=0.$$
In other words, $\gpb{V}$ has a theta-divisor.
\end{theorem}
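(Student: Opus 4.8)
The plan is to compute $\hom(\gpb L,\gpb V)$ explicitly on the normalization and to read off the theta condition as the nonvanishing of a determinant on the generalized Jacobian. Since the type $B$ correspondence preserves degree, the bundle $\t V$ underlying $\gpb V$ has $\deg\t V=0$, and $\t L=\O_{\P^1}(1-g)$; by Grothendieck's theorem $\t V\isom\bigoplus_{j=1}^r\O(e_j)$ with $\sum_j e_j=0$. A morphism $\gpb L\to\gpb V$ is a bundle map $\psi:\t L\to\t V$ on $\P^1$ carrying $F_i(\t L)$ into $F_i(\t V)$. Writing $F_i(\t V)$ as the graph of the gluing isomorphism $\phi_i:\t V_{a_i}\to\t V_{b_i}$ and $F_i(\t L)$ as the graph of a scalar $\lambda_i\in k^{*}$ (the $\lambda=(\lambda_1,\dots,\lambda_g)\in(k^{*})^{g}$ being precisely coordinates on $\Pic^{1-g}(X)$), the parabolic condition becomes $\phi_i(\psi(a_i))=\lambda_i\,\psi(b_i)$ for $i=1,\dots,g$. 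Using $\hom_{\P^1}(\t L,\t V)=H^0(\t V\tensor\O(g-1))$ this identifies
\[
\hom(\gpb L,\gpb V)=\ker\Big(T_\lambda:H^0(\P^1,\t V\tensor\O(g-1))\to\bigoplus_{i=1}^g\t V_{b_i}\Big),\quad T_\lambda(\psi)=\big(\phi_i(\psi(a_i))-\lambda_i\psi(b_i)\big)_i.
\]

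Next I would check that $T_\lambda$ is a map between spaces of the same dimension. The target has dimension $gr$, and Riemann--Roch on $\P^1$ gives $\chi(\t V\tensor\O(g-1))=r(g-1)+r=rg$. Testing the quotient generalized parabolic bundle $\t V\twoheadrightarrow\O(e_r)$ onto the most negative summand (whose parabolic slope must be $\geq\mu(V)=0$, and whose parabolic contribution is at most $2$ at each of the $g$ pairs) forces $e_r\geq -g$, hence $e_j\geq -g$ for all $j$; therefore $H^1(\t V\tensor\O(g-1))=0$ and $\dim H^0(\t V\tensor\O(g-1))=rg$. Thus $T_\lambda$ is an endomorphism-type map of $rg$-dimensional spaces, so it is injective iff it is bijective, and $\gpb V$ has a theta divisor precisely when the function $P(\lambda):=\det T_\lambda$ is not identically zero on $(k^{*})^{g}$; its nonvanishing locus is then the required open set of line bundles $\t L$ of degree $1-g$.

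It therefore remains to prove $P\not\equiv 0$, and I expect this to be the main obstacle. Equivalently, the \emph{gluable locus}
\[
Y=\{[\psi]\in\P(H^0(\t V\tensor\O(g-1))):\ \phi_i(\psi(a_i))\ \text{and}\ \psi(b_i)\ \text{are proportional for all}\ i\}
\]
must fail to dominate the torus under $[\psi]\mapsto(\lambda_i(\psi))_i$; since a general point of $Y$ has a unique preimage $\lambda$, this is the dimension bound $\dim Y\leq g-1$. Each proportionality condition has codimension at most $r-1$, giving $\dim Y\geq (rg-1)-g(r-1)=g-1$ automatically, so the whole content is the reverse inequality: that the $g$ gluing conditions are \emph{independent}. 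This transversality is the crux of the argument.

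To control $\dim Y$ I would use semistability exactly here: excess dimension of $Y$ would produce a positive-dimensional family of sections $\psi$ concentrating on sub-line-bundles of $\t V$ whose descent to $X$ has degree too large, and the parabolic semistability inequalities applied to the saturations of $\mathrm{im}(\psi)\subseteq\t V$ are designed to forbid this. Because $\t X=\P^1$, the sections $\psi$ are honest polynomials and the conditions reduce to an explicit Vandermonde-type linear-algebra problem in the splitting data $(e_j)$ and the gluing matrices $(\phi_i)$, both pinned down by semistability; it is precisely this rigidity of $\P^1$ — absent on a smooth curve, where the analogue fails by \cite{raynaud82} — that should make the bound $\dim Y\leq g-1$ provable for every semistable $\gpb V$. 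Once $P\not\equiv0$ is established, Theorem~\ref{main3} follows, and hence Theorems~\ref{main2} and~\ref{main1}.
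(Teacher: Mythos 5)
Your reduction of the theta condition to the nonvanishing of $P(\lambda)=\det T_\lambda$ is correct and in fact matches the paper's own determinantal description of the theta divisor (its Section 3), and your dimension count $h^0(\t V\tensor\O(g-1))=rg$ is right once $e_j\geq -g$ is known. Two small slips there: the paper defines semistability only via sub-GPBs, so your quotient test should be justified through the kernel subbundle (degrees of induced and image structures add, so the two tests are equivalent); and the slope threshold for the quotient is $\mu(\gpb V)=g$, not $\mu(V)=0$ --- with parabolic contribution at most $2$ per pair one gets $e_r\geq g-2g=-g$, whereas the inequality as you wrote it yields only $e_r\geq -2g$, which does not kill $H^1$. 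The genuine gap, however, is that your proof stops exactly at what you yourself identify as the crux: $P\not\equiv 0$ is never proved, only a strategy is gestured at (``I would use semistability exactly here,'' ``should make the bound provable''). Moreover the sketched dimension count on the gluable locus $Y$ has an unaddressed obstruction: a section $\psi$ with $\psi(a_i)=\psi(b_i)=0$ imposes \emph{no} condition on $\lambda_i$, so the locus $\{P=0\}$ is not just the closure of the image of the graph map on $Y$; the strata of sections vanishing at both points of some pairs sweep out entire coordinate subtori, and bounding these strata is precisely where semistability has to do its work. Nothing in your outline carries this out, and the hoped-for ``Vandermonde-type'' transversality for arbitrary gluing matrices $\phi_i$ is not a routine computation.

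The paper's proof avoids generic dominance arguments altogether by exhibiting \emph{explicit} $\lambda$ at which $T_\lambda$ is injective: choose each $\lambda_i$ to avoid the finitely many eigenvalues of the gluing isomorphism $A_i$ whose graph is $F_i(\t V)$ (possible as $k$ is algebraically closed, hence infinite). Semistability enters only once, through the zero lemma (Proposition~\ref{non-vanishing}): a nonzero map $\t L\to\t V$ vanishing at $d$ of the $2g$ marked points factors through $\O_{\P^1}(1-g+d)$ with its induced GPB structure, and $1-g+d\leq \mu(\gpb V)=g$ forces $d\leq 2g-1$. A fiberwise chase with the graph structures then gives a dichotomy at each pair: the induced fiber map vanishes at both of $a_i,b_i$ or at neither (vanishing at exactly one point contradicts commutativity, since $A_i$ and $\lambda_i$ are isomorphisms). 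Since the map cannot vanish at all $2g$ points, some pair has nonvanishing at both, and commutativity there forces $\lambda_i$ to be an eigenvalue of $A_i$ --- contradiction. In your framework this says precisely that $P(\lambda)\neq 0$ at every $\lambda$ off the union of the $g$ spectral hypersurfaces $\prod_{i=1}^g\det(A_i-\lambda_i I)=0$, which is much stronger and much easier than the generic statement you were aiming for; your setup is completable, but the completing idea --- eigenvalue avoidance combined with the zero lemma and the pairwise vanishing dichotomy --- is the entire content of the paper's argument and is absent from your proposal.
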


\subsection{} We note that in \cite{raynaud82} it was shown that there exists stable torsion-free sheaves on a Castelnuovo curve of genus $g\geq 2$ which do not have a theta divisor. The moduli space of semistable torsion-free sheaves on a Castelnuovo curve exists (see \cite{seshadri82}) and properly contains the locus of locally-free semistable sheaves as an open subset.

\subsection{} In this paper we have treated the case of generalized parabolic bundles on $\t X=\P^1$, in a forthcoming paper \cite{joshi06b} we will treat the case when $\t X$ is an aribitrary smooth, projective curve (not neccessarily of genus $0$). The details are a little more complicated and it was felt by us that, for readability, it would be best to separate them.

\subsection{Acknowledgements} This paper was written while the first author was visiting the Tata Institute  and the author thanks the institute for hospitality. We are also grateful to Usha Bhosle for conversations and correspondence about generalized parabolic bundles.

\section{Preliminaries}
\subsection{Castelnuouvo curves} Let $k$ be an algebraically closed field.
Let $\tX=\P^1$ be the projective line over $k$. Let $a_i\neq b_i$
be $1\leq i\leq g$ pairs of $g$ pairs of points $\tX$. Identifying
$a_i$ with $b_i$ gives us a nodal curve $X$ which is an integral
nodal curve of arithmetic genus $g$ and has exactly $g$ nodes. The
normalization of $X$ is $\tX=\P^1$ and the normalization map is
$\pi:\tX=\P^1\to X$ which identifies the points $a_i$ with $b_i$
for $1\leq i\leq g$.

Conversely, if $X$ is an integral nodal curve of arithmetic genus
$g$ and with $g$ nodes, then it arises in the above manner for
some $g$ pairs of points $a_i\neq b_i$.

We will call, an integral, nodal curve with $g$ nodes, and of
arithmetic genus $g$, a \emph{Castelnuovo curve of genus $g$}.

\subsection{} In what follows,
we will need to work with sheaves on $\t X$
as well as on $X$. So we will adopt the following convention: sheaves on $\t X$ will be denoted with a $\tilde{}$ on top, while sheaves on  $X$ will be denoted without the $\tilde{}$ on top. Thus $\t V,\t L$ etc will be sheaves on $\t X$, and $V, L$ will be sheaves on $X$.

\subsection{Stalks and fibers}\label{stalksandfibres} Let $\t V, \t W$ be vector bundles on $\t X$, let $\t f:\t W\to \t V$ be morphism of $\O_{\t X}$-modules. Let $x\in \t X$ be a point. We will write $V_{\t X,x}$ for the stalk of $\t V$ at $x$ and write $\t V_x=V\tensor_{\O_{\t X,x}} k$ for the fibre at $x$. Recall that a morphism $\t f:\t W\to \t V$ induces a morphism on the stalks at $x$ and also a morphism $W_x \to V_x$ on the fibres (this morphism may be identically zero!).

\subsection{Generalized parabolic bundles}
We shall use the formalism of generalized parabolic
bundles of \cite{bhosle92}.
Since this is not so well-known we recall facts proved in \cite{bhosle92,bhosle96}. A \emph{generalized parabolic bundle} (a GPB for short) on $\tX$ is a collection $(\t{V},\{F_i(\t V)\}_{1\leq i\leq g}\})$ where $\t V$ is a vector bundle on $\t X$ and $F_i(\t V)\subset \t V_{a_i}\oplus \t V_{b_i}$ is a subspace.  We will simply write $(\t V, F_i(\t V))$ instead of $(\t{V},\{F_i(\t V)\}_{1\leq i\leq g}\})$ for simplicity.

\subsection{Morphisms of generalized parabolic bundles} A \emph{morphism of generalized parabolic bundles} $f:\gpb W \to \gpb V$ is a morphism $f: W\to V$ of $\O_{\t X}$-modules and a map of vector spaces $F_i(\t W) \to F_i(\t V)$ for $1\leq i\leq g$ such that the diagram
$$
\xymatrix{
  F_i(\t W) \ar[d]_{} \ar[r]^{} & \t W_{a_i}\oplus \t W_{b_i} \ar[d]^{} \\
  F_i(\t V) \ar[r]^{} & \t V_{a_i}\oplus \t V_{b_i}   }
$$
commutes. Here the vertical arrow on the right is the direct sum
of the natural arrow (see \ref{stalksandfibres}) $\t W_{a_i}\to \t
V_{a_i}$ and the corresponding arrow for $b_i$.

\subsection{Induced GPB structures}\label{induced-structures} Suppose $\gpb V$ is a GPB and $\t W\subset \t V$ is a subsheaf. Then $W$ carries an \emph{induced GPB structure}: define, for $1\leq i\leq g$, a subspace
$F_i^{ind}(\t W)\subset\t W_{a_i}\oplus \t W_{b_i}$ as the
equalizer of the two arrows to $\t V_{a_i}\oplus\t V_{b_i}$:
\begin{equation}
\xymatrix{
   & \t W_{a_i}\oplus \t W_{b_i} \ar[d]^{} \\
  F_i(\t V) \ar[r]^{} & \t V_{a_i}\oplus \t V_{b_i}   }
\end{equation}
Explicitly $F_i^{ind}(\t W)$ consist of elements of $\t
W_{a_i}\oplus \t W_{b_i}$ whose image lies in $F_i(\t V)$. The
diagram
$$
\xymatrix{
  F_i^{ind}(\t W) \ar[d]_{} \ar[r]^{} & \t W_{a_i}\oplus \t W_{b_i} \ar[d]^{} \\
  F_i(\t V) \ar[r]^{} & \t V_{a_i}\oplus \t V_{b_i}   }
$$ commutes by properties of equalizers.
This equips the bundle $\t W$ with GPB structure, which we call
the \emph{induced GPB structure} on $\t W$. Obviously we have a
morphism $\gpb W \to \gpb V$.

We caution the reader that our definition of GPB structure is
slightly different with that of \cite{bhosle92}, where $\t
W\subset \t V$ is a subbundle, in which case the maps on the
fibres is an inclusion, so we can talk about intersections of
subspaces of fibres of $\t V$. It is easy to see that our
definition agrees with loc. cit. in this case, and works in all
cases.

\subsection{Generalized parabolic bundles of types $B$ and  $T$} A generalized parabolic bundle $\gpb V$ is said to be of type $B$ if the composite maps
\begin{equation}
\xymatrix{
& & \t V_{a_i}\\
F_i(\t V)\ar[r]\ar[urr]^\simeq\ar[drr]_\simeq & \t V_{a_i}\oplus \t V_{b_i} \qquad\ar[ur]\ar[dr] & \\
& & \t V_{b_i} }
\end{equation}
are both isomorphisms. A GPB which is not of type $B$ is called a
GPB of type $T$. The terminology has its origins in the nature of
the torsion-free sheaves which these bundles give rise to. A GPB
of type $B$ gives rise to a locally-free sheaf on $X$, while a GPB
of type $T$ gives rise to a torsion-free but non-locally-free
sheaf on $X$. Thus type $B$ is short for ''bundle type'', while
type $T$ is everything torsion-free but not bundle type. We
caution the reader that this terminology was not introduced in
\cite{bhosle92}.

\subsection{} The following Lemma is immediate from \cite{bhosle92} (also see \cite{raynaud82,seshadri82}).
\begin{lemma}
Let $\gpb V$ be a generalized parabolic bundle on $\t X$. Then the
following are equivalent:
\begin{enumerate}
\item the GPB $\gpb V$ is of type $B$,
\item for every $i$, $1\leq i\leq g$, the subspace $F_i(\t V)\subset \t V_{a_i}\oplus \t V_{b_i}$ is the graph of an isomorphism $\t V_{a_i}\to \t V_{b_i}$.
\end{enumerate}
\end{lemma}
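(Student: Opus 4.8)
The plan is to reduce the statement to a piece of elementary linear algebra carried out independently at each of the $g$ nodes. Fix an index $i$ and write $A=\t V_{a_i}$ and $B=\t V_{b_i}$ for the two fibres; since $\t V$ is a vector bundle of some rank $r$, both $A$ and $B$ are $r$-dimensional. Set $F=F_i(\t V)\subset A\oplus B$, and let $p_A\colon F\to A$ and $p_B\colon F\to B$ denote the two composites appearing in the definition of type $B$, namely the restrictions to $F$ of the two projections of $A\oplus B$ onto its factors. With this notation, condition (1) says exactly that $p_A$ and $p_B$ are both isomorphisms, and the task is to match this with the graph condition (2).

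For the implication $(2)\Rightarrow(1)$ I would suppose $F$ is the graph of an isomorphism $\phi\colon A\to B$, so that $F=\{(a,\phi(a)):a\in A\}$. Then $p_A$ sends $(a,\phi(a))$ to $a$ and is visibly bijective, while $p_B$ sends $(a,\phi(a))$ to $\phi(a)$ and is bijective precisely because $\phi$ is; hence both composites are isomorphisms and $\gpb V$ is of type $B$.

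For the converse $(1)\Rightarrow(2)$ I would assume both $p_A$ and $p_B$ are isomorphisms and define $\phi:=p_B\circ p_A^{-1}\colon A\to B$, which is an isomorphism as a composite of isomorphisms. The claim is that $F=\operatorname{graph}(\phi)$. Given $a\in A$, the unique element of $F$ with first coordinate $a$ is $p_A^{-1}(a)=(a,b)$, whose second coordinate is $b=p_B(p_A^{-1}(a))=\phi(a)$; thus every element of $F$ has the form $(a,\phi(a))$. Since $p_A$ is surjective, each $a\in A$ occurs, so $F=\{(a,\phi(a)):a\in A\}$ is exactly the graph of $\phi$.

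There is no genuine obstacle here; the only point worth flagging is the dimension bookkeeping. A priori $F$ is an arbitrary subspace of $A\oplus B$, but the type $B$ hypothesis that $p_A$ is an isomorphism forces $\dim F=\dim A=r$, which is what makes a graph description possible, while conversely the graph of any map out of $A$ automatically has dimension $r$. As the construction is natural in the fibres and is performed one node at a time, the fibrewise equivalence immediately yields the Lemma as stated.
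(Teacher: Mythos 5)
Your proof is correct: the fibrewise reduction at each node, the verification that a graph of an isomorphism has both projections bijective, and the converse construction $\phi = p_B\circ p_A^{-1}$ together with the observation that every element of $F$ is forced to have the form $(a,\phi(a))$ are all sound, and you correctly use the bijectivity of \emph{both} composites (surjectivity of $p_A$ to cover the graph, and $p_B$ an isomorphism to make $\phi$ one). The paper itself supplies no argument at all --- it declares the lemma ``immediate'' from Bhosle's work --- so your write-up is simply a complete version of the elementary linear algebra the authors had in mind, and nothing needs to change.
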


\subsection{Degrees and slopes} Let $\t V$ be a vector bundle on $\t X$. Then its usual degree on $\t X$ will be denoted by $\deg(\t V)$, while its slope $\mu(\t V)=\deg(\t V)/\rk(\t V)$.

 Now let $\gpb V$ be a GPB on $\tX$. Then its \emph{generalized parabolic degree} is given by
\begin{equation}
\deg(\gpb V)=\deg(\t V)+\sum_{i=1}^g \dim F_i(\t V).
\end{equation}
The \emph{generalzed parabolic  slope} of $\gpb V$ is a defined by
\begin{equation}
\mu(\gpb V)=\frac{\deg(\gpb V)}{\rk(\t V)}=\frac{\deg(\t
V)+\sum_{i=1}^g \dim F_i(\t V)}{\rk(\t V)}.
\end{equation}

\subsection{} Note, in particular, that if $\gpb V$ is a generalized parabolic bundle of type $B$ and rank $r$ then we have the following formula:
\begin{equation}
\deg(\gpb V)=\deg(\t V)+gr.
\end{equation}
Hence we have
\begin{equation}
\mu(\gpb V)=\mu(\t V)+g.
\end{equation}
In particular, if $\gpb L$ is a GPB of type $B$ with $\rk (\t
L)=1, \deg(\t L)=1-g$, then we have
\begin{equation}
\mu(\gpb L)=\deg(\gpb L)=1.
\end{equation}

\subsection{Stability and semi-stability} We say that a generalized Parabolic bundle $\gpb V$ is semistable (resp. stable), if for every $\gpb W\subset \gpb V$, we have
$\mu(\gpb W)\leq \mu(\gpb V)$ (resp. $\mu(\gpb W)< \mu(\gpb V)$).

\subsection{Torsion-free sheaves on Castelnuovo curves}
\label{gpbdescent} Let $\gpb V$ be a GPB on $\t X$. Then the subsheaf $V$ defined by the exact sequence
\begin{equation}
0\to V\to \pi_*(\t V) \to \oplus_{i=1}^g\frac{\t V_{a_i}\oplus \t
V_{b_i}}{F_i(\t V)}\to 0
\end{equation}
is a torsion free sheaf on $X$ with $\rk(V)=\rank(\t V)$.
Conversely given any torsion-free sheaf on $X$, there is a GPB on
$\t X$ which gives rise to it in this manner. The following
theorem is from \cite{bhosle92}.

\begin{theorem} Under the correspondence $\gpb{V}\mapsto V$, a generalized parabolic bundle of type $B$ gives rise to a vector bundle $V$ and conversely every vector bundle on $X$ arises from a GPB of type $B$ on $\t X$. This GPB of type $B$ is unique upto  isomorphism.
\end{theorem}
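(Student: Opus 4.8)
The plan is to reduce the whole statement to a local computation at each node, since $\pi$ is an isomorphism over the complement of the nodes and there the quotient sheaf in the construction of \ref{gpbdescent} vanishes, so that $V$ is already locally free away from the nodes. Fix a node $p_i=\pi(a_i)=\pi(b_i)$ and write $A=\O_{\tX,a_i}$, $B=\O_{\tX,b_i}$ for the two discrete valuation rings with residue field $k$; the local ring of $X$ at $p_i$ is the fibre product $R=A\times_k B=\{(f,g)\in A\times B:\ f\equiv g \text{ in }k\}$, sitting in an exact sequence $0\to R\to A\times B\to k\to 0$. The stalk of $\pi_*\t{V}$ at $p_i$ is $\t{V}_A\oplus\t{V}_B$, where $\t{V}_A$ and $\t{V}_B$ denote the stalks, free of rank $r$ over $A$ and $B$ respectively, regarded as an $R$-module; and that exact sequence identifies the stalk of $V$ with $\{(u,v)\in \t{V}_A\oplus\t{V}_B:\ (\bar u,\bar v)\in F_i(\t{V})\}$, where a bar denotes the image in the fibre $\t{V}_{a_i}$ resp. $\t{V}_{b_i}$.

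For the first assertion I would use the type $B$ hypothesis to make this stalk a free $R$-module of rank $r$. Writing $F_i(\t{V})$ as the graph of the isomorphism $\phi_i:\t{V}_{a_i}\to\t{V}_{b_i}$ furnished by the preceding Lemma, I would choose an $A$-basis $e_1,\dots,e_r$ of $\t{V}_A$ and a $B$-basis $f_1,\dots,f_r$ of $\t{V}_B$ with $\phi_i(\bar e_j)=\bar f_j$; the possibility of this alignment is exactly where both projections of $F_i(\t{V})$ being isomorphisms enters. Then $(\sum a_je_j,\sum b_jf_j)$ lies in the stalk of $V$ iff $\bar a_j=\bar b_j$ for every $j$, i.e. iff each pair $(a_j,b_j)$ lies in $R$; hence $\{(e_j,f_j)\}_{1\le j\le r}$ is an $R$-basis and the stalk is $R^r$. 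Since $V\subset\pi_*\t{V}$ and $\pi_*\t{V}$ is torsion-free, $V$ is torsion-free, and being free at each node it is locally free. The same computation shows a GPB of type $T$ cannot give a bundle: if a projection of $F_i(\t{V})$ drops rank, the stalk ceases to be free.

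For surjectivity, given a vector bundle $V$ on $X$ I would set $\t{V}=\pi^*V$, which is locally free on $\tX$ because $\pi$ is finite and $V$ is locally free. As $\pi(a_i)=\pi(b_i)=p_i$ there are canonical identifications $\t{V}_{a_i}\cong V\tensor k(p_i)\cong\t{V}_{b_i}$ of fibres, and I would take $F_i(\t{V})$ to be the diagonal, i.e. the graph of the identity, which is visibly of type $B$. To see that this GPB reproduces $V$, I would run the local kernel computation with the stalk of $V$ written as $R^r$: the adjunction map $R^r\to A^r\oplus B^r$ is the componentwise inclusion, its image in the fibre is the diagonal, and the kernel of $A^r\oplus B^r\to k^r$, $(u,v)\mapsto\bar u-\bar v$, is exactly $R^r$; hence the sheaf associated to $(\t{V},F_i(\t{V}))$ is $V$ again.

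For uniqueness, suppose a type $B$ GPB $\gpb V$ gives rise to $V$. The inclusion $V\hookrightarrow\pi_*\t{V}$ is adjoint to a morphism $\pi^*V\to\t{V}$, and on stalks — using the aligned basis $(e_j,f_j)$ above — this sends the $R$-basis of the stalk of $V$ to the $A$-basis $e_j$, so it is an isomorphism; thus $\t{V}$ is recovered canonically as $\pi^*V$. The subspace $F_i(\t{V})$ is then forced: the composite $V\to\pi_*\t{V}\to(\t{V}_{a_i}\oplus\t{V}_{b_i})/F_i(\t{V})$ vanishes, so the fibre-image of the local sections of $V$ lies in $F_i(\t{V})$, a lifting argument shows this image is all of $F_i(\t{V})$, and comparing with the diagonal of dimension $r=\dim F_i(\t{V})$ pins $F_i(\t{V})$ down. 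Hence any two type $B$ GPBs inducing $V$ have canonically isomorphic bundles under which their parabolic subspaces agree. The step I expect to be the main obstacle is the local freeness computation — setting up the node as $R=A\times_k B$ and verifying that the graph-of-an-isomorphism condition is precisely what turns $\{(u,v):(\bar u,\bar v)\in F_i(\t{V})\}$ into a free rank-$r$ $R$-module; once this local model is in place, torsion-freeness, the identification $\pi^*V\cong\t{V}$, and the recovery of $F_i(\t{V})$ are routine local checks.
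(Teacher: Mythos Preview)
The paper does not actually prove this theorem: it is quoted from \cite{bhosle92} (``The following theorem is from \cite{bhosle92}'') and no argument is supplied in the paper itself. So there is no in-paper proof to compare your proposal against.

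That said, your proposal is a correct and standard direct argument. The identification of the local ring at a node as the fibre product $R=A\times_k B$ is right (since $\pi$ is finite, the stalk of $\pi_*\O_{\tX}$ at $p_i$ is $A\times B$, and $\O_{X,p_i}$ sits inside it as pairs with equal residue), and the key computation --- that the graph-of-an-isomorphism condition on $F_i(\t V)$ is exactly what lets one align bases so that the kernel becomes $R^r$ --- is precisely the content of the type~$B$ hypothesis. Your surjectivity and uniqueness arguments via $\t V\cong\pi^*V$ and the recovery of $F_i(\t V)$ as the diagonal are also correct; the one point worth making explicit is that the fibre-image of the stalk of $V$ in $\t V_{a_i}\oplus\t V_{b_i}$ has dimension exactly $r$ (since $R^r\to k^r\times k^r$ surjects onto the diagonal), which together with the containment in $F_i(\t V)$ and $\dim F_i(\t V)=r$ forces equality. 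This is essentially how Bhosle's argument in \cite{bhosle92} proceeds as well, so your approach is in line with the original source even though the present paper omits the proof.
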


\section{Theta divisors for generalized parabolic bundles}
\subsection{} We extend the notion of theta divisors of vector
bundles to theta divisors of generalized parabolic bundles.

\subsection{} We will say that a generalized parabolic bundle
$\gpb V$ has a \emph{generalized parabolic theta divisor} if there
exists a GPB of type $B$ with $\rk(\t L)=1$ such that
$$\hom(\gpb L,\gpb V)=0.$$

\subsection{} If no such line bundle exists for a $\gpb V$, then every GPB of type $B$ and rank one admits a non-zero map to $\gpb V$. This is similar to the situation for vector bundles.

\subsection{} We now explain why the nomenclature ``theta divisor'' is
appropriate. To do this we need to explicate the condition in the
definition. We may do this in $\Pic(X)$, rather than in the scheme
of GPB line bundles of type $B$ on $\t X$ (which is a
$\P^1$-bundle on the compactification of $\Pic(X)$).

Suppose $\gpb L$ is a GPB of type $B$ on $\t X$ with $\rk(\t L)=1$
(so it gives rise to a line bundle $L$ on $X$. Moreover $F_i(\t
L)\subset L_{a_i}\oplus L_{b_i}$ is the graph of some isomorphism
$L_{a_i}\to L_{b_i}$. Such an isomorphism is given by a non-zero
scalar, say $\lambda_i$.

Now suppose $\gpb V$ is a GPB of type $B$ and suppose that we have
a morphism of GPB's $\gpb L \to \gpb V$. Then we have a
commutative diagram
\begin{equation}
\xymatrix{
F_i(\t L) \ar[r]\ar[d] & \t L_{a_i}\oplus \t L_{b_i}\ar[d]\\
F_i(\t V) \ar[r] & \t V_{a_i}\oplus \t V_{b_i}\\
}
\end{equation}
Let $F_i(\t V)$ be the graph of an isomorphism $A_i:\t V_{a_i}\to
\t V_{b_i}$, for $1\leq i\leq g$. Then the commutativity of the
diagram forces the condition
\begin{equation}
\prod_{i=1}^g \det(A_i-\lambda_i I)=0
\end{equation}
This is a divisorial condition in the space of GPB bundles of type
$B$ and rank one and the above equation is the equation of the
theta divisor (when it is not all of $\Pic(X)$).

\section{A zero lemma}
\subsection{} In this section we prove a couple of preparatory lemmas for the next section.
\begin{proposition}\label{non-vanishing}
Let $\gpb V$ be a semistable GPB of type $B$ on $\t
X$, assume $\deg(\t V)=0$. Suppose $\gpb L$ is a GPB
of type $B$ with $\rk(\t L)=1$ and $\deg(\t L)=1-g$.
Suppose $\gpb L\to \gpb V$ is any non-zero map.
Assume that the underlying morphism of $\O_{\t
X}$-modules  $\t L \to \t V$ vanishes at $d$ of the
$2g$ point $\{a_i,b_i\}_{1\leq i\leq g}$. Then $d\leq
2g-1$.
\end{proposition}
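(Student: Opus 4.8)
The plan is to reinterpret the non-zero morphism as a section of a line bundle and count its zeros, controlling the relevant degree by the semistability of the GPB. First I would use that $\t L \to \t V$ is a non-zero map from a line bundle into a torsion-free sheaf on the integral curve $\tX = \P^1$, hence injective as a map of $\O_{\tX}$-modules. Let $\t M \subset \t V$ be the saturation of its image; since $\tX$ is a smooth curve, $\t M$ is a line subbundle of $\t V$, and the inclusion $\t M \hookrightarrow \t V$ has injective fibre maps at every point (because $\t V/\t M$ is locally free). The induced map $\t L \to \t M$ is then a non-zero section of the line bundle $\t M \otimes \t L^{-1}$, whose degree is $\deg(\t M)-\deg(\t L)=\deg(\t M)+g-1$; this section vanishes on an effective divisor $D$ with $\deg(D)=\deg(\t M)+g-1$.

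Next I would translate the vanishing hypothesis into a statement about $D$. The fibre map $\t L_x \to \t V_x$ factors as $\t L_x \to \t M_x \to \t V_x$, and the second arrow is injective because $\t M$ is a subbundle; hence $\t L \to \t V$ vanishes at $x$ precisely when the section of $\t M \otimes \t L^{-1}$ vanishes at $x$, i.e. when $x\in\mathrm{supp}(D)$. The $d$ points of $\{a_i,b_i\}_{1\leq i\leq g}$ at which the morphism vanishes are therefore $d$ distinct points of $\mathrm{supp}(D)$, so $d \leq \deg(D)=\deg(\t M)+g-1$.

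Finally I would bound $\deg(\t M)$ using semistability. Equip $\t M$ with the induced GPB structure of \ref{induced-structures}, giving a sub-GPB $\gpb M \subset \gpb V$. Since $\gpb V$ is of type $B$ with $\deg(\t V)=0$ we have $\mu(\gpb V)=g$, so the semistability inequality $\mu(\gpb M)\leq \mu(\gpb V)$ reads $\deg(\t M)+\sum_{i=1}^g \dim F_i^{ind}(\t M)\leq g$, whence $\deg(\t M)\leq g$. Combining this with the previous step yields $d \leq \deg(\t M)+g-1 \leq 2g-1$, as claimed. The main obstacle is the middle step: one must make sure that ``vanishing on fibres'' is genuinely a divisorial condition of the expected degree. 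This is why passing to the saturated line subbundle $\t M$ — rather than working with the possibly non-saturated image of $\t L$, or with a section of the rank-$r$ bundle $\t V\otimes\t L^{-1}$ — is essential, since it isolates all fibrewise vanishing into the zeros of a single line-bundle section and at the same time produces exactly the sub-object to which the GPB semistability inequality should be applied.
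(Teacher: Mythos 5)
Your proof is correct and takes essentially the same route as the paper: both arguments factor the non-zero map through an intermediate line subsheaf of $\t V$, endow it with the induced GPB structure of \ref{induced-structures}, and play its generalized parabolic slope against $\mu(\gpb V)=g$ using semistability. The only difference is cosmetic bookkeeping — the paper twists $\t L$ by exactly the $d$ prescribed vanishing points to get $\t M=\O_{\P^1}(1-g+d)$ and reads off $1-g+d\leq \mu(\gpb M)\leq g$, whereas you pass to the full saturation and first bound $\deg(\t M)\leq g$; your saturation step in fact makes the identification of fibrewise vanishing with divisorial vanishing slightly more explicit than the paper's own phrasing.
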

\begin{proof}
If the map $\t L\to \t V$ vanishes at $d$ points,
then this map factors as $\t L \to \t
M=\O_{\P^1}(1-g+d)\to \t V$. Now give $\t
M=\O_{\P^1}(1-g+d)$ the induced GPB structure (see
\ref{induced-structures} $F_i^{ind}(\t M)\subset \t
M_{a_i}\oplus \t M_{b_i}$. Now we have
\begin{equation} \mu(\gpb M)=\deg(\t M)+\sum_i \dim
F_i(\t M).
\end{equation}
 So in any case $\mu(\gpb M)\geq 1-g+d$. On the other hand $\gpb V$ is a
 semistable GPB of type $B$. So its generalized parabolic slope is
\begin{equation}
\mu(\gpb V)=\deg(\t V)+g=g.
\end{equation}
Thus by semi-stability we see that $$1-g+d\leq
\mu(\gpb{M})\leq\mu(\gpb V)=g,$$ that is, $1-g+d\leq
g$. Hence $d\leq 2g-1$. This proves the proposition.
\end{proof}

\section{Theta divisors for GPB bundles of type $B$}
\subsection{} Now we are ready to prove Theorem~\ref{main3}. Let us recall the statement.
\begin{theorem}\label{main4}
Let $\gpb V$ be a semistable GPB of type $B$ on $\t
X$. Suppose that $\deg(\t V)=0$ and $g\geq 2$. Then
there exists a GPB $\gpb L$, with $\deg(\t L)=1-g$
and $\rk(\t L)=1$ such that
\begin{equation}
\hom(\gpb L,\gpb V)=0.
\end{equation}
 In other words $\gpb V$ has a theta divisor.
\end{theorem}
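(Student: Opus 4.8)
The plan is to parametrise the rank-one type-$B$ GPBs $\gpb L$ with $\deg(\t L)=1-g$ and to turn the non-existence of a non-zero morphism into a genericity statement. Since every degree $1-g$ line bundle on $\P^1$ is isomorphic to $\O_{\P^1}(1-g)$, such a $\gpb L$ is determined by the gluing scalars $\lambda=(\lambda_1,\dots,\lambda_g)\in(k^\ast)^g$ that present $F_i(\t L)\subset\t L_{a_i}\oplus\t L_{b_i}$ as the graph of $\lambda_i$. Writing $F_i(\t V)$ as the graph of the isomorphism $A_i:\t V_{a_i}\to\t V_{b_i}$, a morphism $\gpb L\to\gpb V$ is the same datum as a section $\sigma\in H^0(\t X,\t V(g-1))=\hom_{\O_{\t X}}(\O(1-g),\t V)$ subject to the $g$ parabolic compatibilities $A_i\,\sigma(a_i)=\lambda_i\,\sigma(b_i)$, one per node. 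Thus I must exhibit one $\lambda$ for which the only such $\sigma$ is $0$.

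First I would fix the dimension of the section space. Applying semistability of $\gpb V$ to the induced sub-GPB on a maximal-degree summand of $\t V=\bigoplus_j\O(a_j)$, and to the quotient onto a minimal-degree summand (where each $\dim F_i\le 2$), bounds the splitting type by $-g\le a_j\le g$; in particular $a_j\ge -g$ gives $h^0(\t V(g-1))=\sum_j(a_j+g)=\deg(\t V)+gr=gr$. Hence $T_\lambda:H^0(\t V(g-1))\to\bigoplus_{i=1}^g\t V_{b_i}$, $\sigma\mapsto(A_i\sigma(a_i)-\lambda_i\sigma(b_i))_i$, is an endomorphism of a $gr$-dimensional space; a non-zero morphism exists iff $\det T_\lambda=0$, so the theta divisor of $\gpb V$ is exactly $\{\det T_\lambda=0\}$, and it remains to see that this determinant is not identically zero on $(k^\ast)^g$.

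For the non-vanishing I would form the incidence variety $\mathcal I=\{([\sigma],\lambda):\sigma\ne0,\ A_i\sigma(a_i)=\lambda_i\sigma(b_i)\ \forall i\}$ inside $\P(H^0(\t V(g-1)))\times(k^\ast)^g$ and bound $\dim\mathcal I$ by projecting to $\P(H^0)$. A non-zero $\sigma$ has non-empty fibre only if for every $i$ the vectors $A_i\sigma(a_i)$ and $\sigma(b_i)$ are proportional; at a node where both are non-zero the scalar $\lambda_i$ is then forced, whereas $\lambda_i$ stays free exactly at the nodes where $\sigma$ vanishes at both $a_i$ and $b_i$. Stratifying $\P(H^0)$ by the number $k$ of such doubly-vanishing nodes, Proposition~\ref{non-vanishing} forces $k\le g-1$, and the fibre of $\mathcal I$ over the stratum is $(k^\ast)^k$. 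Imposing vanishing at the $2k$ points costs $2kr$ and the proportionality at the remaining $g-k$ nodes costs $r-1$ each, so the stratum has dimension at most $(gr-1)-2kr-(g-k)(r-1)=(g-1)-k(r+1)$; adding the fibre dimension $k$ gives $\dim\mathcal I\le(g-1)-kr\le g-1$. As $\dim(k^\ast)^g=g$, the projection $\mathcal I\to(k^\ast)^g$ is not dominant, so a generic $\lambda$ admits no non-zero morphism and $\gpb V$ has a theta divisor.

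The genuine obstacle is the codimension bookkeeping in the previous paragraph: the estimate $(g-1)-k(r+1)$ presupposes that vanishing at $\le 2g-2$ of the marked points imposes independent conditions, and—more delicately—that the $g-k$ proportionality conditions $A_i\sigma(a_i)\parallel\sigma(b_i)$ each cut the expected codimension $r-1$ and do so independently. Both are general-position statements about the evaluation maps $\sigma\mapsto\sigma(a_i),\sigma(b_i)$, and this is exactly where semistability of $\gpb V$ must enter: a drop below the expected codimension should be repackaged as an evaluation-degenerate subsheaf (or quotient) of $\t V$ whose induced parabolic slope exceeds $g$, contradicting semistability. Turning the clean numerology into such a contradiction is the substantive part; the rest is linear algebra together with the degree bound of Proposition~\ref{non-vanishing}.
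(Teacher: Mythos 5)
Your reduction is correct as far as it goes: the identification of a morphism $\gpb L\to\gpb V$ with a section $\sigma\in H^0(\t V(g-1))$ satisfying $A_i\sigma(a_i)=\lambda_i\sigma(b_i)$, the computation $h^0(\t V(g-1))=gr$ from the semistability bound on the splitting type, and hence the description of the theta divisor as $\{\det T_\lambda=0\}$ are all sound. But the proof has a genuine gap exactly where you flag it, and it is not mere bookkeeping: the stratum estimate $\dim S_k\leq (gr-1)-2kr-(g-k)(r-1)$ presumes that the vanishing and proportionality conditions cut the expected codimension, and this can actually fail for semistable $\gpb V$. Semistability only pins the splitting type into $[-g,g]$; for instance $\t V=\O_{\P^1}(g)\oplus\O_{\P^1}(-g)$ carries semistable type-$B$ structures (the sub-GPB on $\O(g)$ has induced slope exactly $g$ for generic $A_i$), and there the space of sections vanishing at the two points of a single node ($k=1$) has dimension $h^0(\O(2g-3)\oplus\O(-3))=2g-2$, strictly larger than the expected $gr-2kr=2g-4$. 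So the inequality you need is false as a pointwise-independence statement, and your proposed repair --- converting any codimension drop into a destabilizing sub-GPB of slope $>g$ --- is precisely the substantive argument, which you acknowledge you have not carried out. Since a single bad stratum destroys the non-dominance of $\mathcal I\to(k^*)^g$, the proof as written does not close.

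The paper's proof avoids all dimension counting. It fixes $\t L=\O_{\P^1}(1-g)$ and simply chooses each $\lambda_i$ to avoid the finitely many eigenvalues of $A_i$, then shows $\hom(\gpb L,\gpb V)=0$ directly: Proposition~\ref{non-vanishing} guarantees a nonzero $f$ cannot vanish at all $2g$ marked points; a fibrewise graph argument shows the vanishing pattern is nodewise (if $f$ is nonzero at $a_i$ but zero at $b_i$, the image of $F_i(\t L)$ would be a nonzero vector $(\t v,0)$ in the graph of the isomorphism $A_i$, forcing $\t v=0$); hence some node has $f$ nonvanishing on both branches, where commutativity of the parabolic square forces $\lambda_i$ to be an eigenvalue of $A_i$ --- contradicting the choice. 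Note that this eigenvalue-avoidance already exhibits a nonempty Zariski-open set of $\lambda$ with $\det T_\lambda\neq 0$, so even a completed incidence-variety argument would buy you nothing beyond the paper's conclusion; the efficient fix is to graft the paper's pointwise node analysis into your determinantal framework rather than to rescue the codimension estimates.
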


\begin{proof}[Proof of Theorem~\ref{main4}]
We will construct such a $\gpb L$ on $\t X$ with the asserted
property. Since $\deg(\t L)=1-g$, we take $\t L=\O_{\P^1}(1-g)$.
So we need to construct a GPB structure on $\t L$ such that there
are no maps $\gpb L\to \gpb V$. Observe that by the Riemann-Roch
theorem, $$\chi(\t V(g-1))=\rk(\t V)(g-1)+\rk(\t V)(1-0)\neq 0,$$
so that $\hom(\t L, \t V)\neq 0$.

  Let $A_i:\t V_{a_i}\to \t V_{b_i}$ be the isomorphism whose graph is $F_i(\t V)\subset \t V_{a_i}\oplus \t V_{b_i}$. Choose, for $1\leq i\leq g$, non-zero scalars $\lambda_i\in k^*$ such that $\lambda_i$ is not an eigenvalue of $A_i$. Let $F_i(\t L)$ be the graph of the isomorphism $\lambda_i: \t L_{a_i}\to \t L_{b_i}$. Then we claim that
$$\hom(\gpb L, \gpb V)=0.$$

Suppose this is not the case. Then there is a non-zero map
$$f:\gpb L\to \gpb V.$$ Consider the underlying map of $\O_{\t
X}$-modules $f:\t L \to \t V$. By
Proposition~\ref{non-vanishing} we know that the map
$f$ can vanishes at no more than $2g-1$ of the $2g$
points $a_i,b_i$ for $1\leq i\leq g$. Assume if
possible that the map $f$ vanishes at all except one
points, say, $a_1$, and by assumption we see that the
map $f$ must be vanishing at $b_1$.

As $\gpb L\to \gpb V$ is a morphism of GPB
structures, the diagram

$$
\xymatrix{
  F_i(\t L) \ar[d]_{} \ar[r]^{} & \t L_{a_1}\oplus \t L_{b_1} \ar[d]^{} \\
  F_i(\t V) \ar[r]^{} & \t V_{a_1}\oplus \t V_{b_1}   }
$$
commutes. On the other hand, the horizontal arrows
are inclusions and the component $\t L_{a_1}\to \t
V_{a_1}$ is non-zero. So the first vertical arrow
must also be non-zero. Now the composite map $F_1(\t
L)\to \t V_{a_1}\oplus \t V_{b_1}$ is also non-zero.
This is because $F_1(\t L)$ is the graph of the
isomorphism $\lambda_1:\t L_{a_1}\to \t L_{b_1}$ and
so the image of $F_1(\t L)$ in the direct sum is not
in the kernel, $L_{b_1}$, of $\t L_{a_1}\oplus \t
L_{b_1}\to \t V_{a_1}\oplus \t V_{b_1}$. So we
deduce, by commutativity of the diagram, that $F_1(\t
L)\to F_1(\t V)$ is also non-zero.

Let $\t\ell$ be a basis of $F_1(\t L)$ (note that
this is a one dimensional space). As the diagram
commutes, the image of $\t\ell$ is, on one hand,
equal to
$\t\ell\mapsto(\t\ell,\lambda_1\t\ell)\mapsto(\t
v,0)$. While on the other hand, it is given by
$\t\ell\mapsto \t w\mapsto(\t u,A_1\t u)$ where
$F_1(\t V)$ is the graph of $A_1:\t V_{a_1}\to \t
V_{b_1}$. So we must have $(\t v,0)=(\t u,A_1\t u)$.
Hence we must have $\t v=\t u$ and $A_1\t u=0$. But
$A_1:\t V_{a_1}\to \t V_{b_1}$ is an isomorphism. So
$A_1\t u=0$ gives $\t u=0$. Hence the image of
$\t\ell$ under the composite $F_1(\t L)\to F_1(\t
V)\to \t V_{a_1}\oplus\t V_{b_1}$ is zero. But this
is a contradiction. Hence our assumption that $f:\t
L\to \t V$ vanishes at exactly one point must be
wrong.

So we see that $f$ must be non-vanishing  at  two or more
points  and further, the same argument as above shows
that the set of points at which it is non-vanishing
must contain a pair of the form $a_i,b_i$ for some
$i$.

Thus we can assume that $\t L \to \t V$ induces a
non-zero map at the stalks at a pair of points, say
$a_1,b_1$ (after a renumbering these points if
required), with a torsion-free cokernel. Thus $\t L
\to \t V$ is a bundle map at $a_1,b_1$. So we have an
inclusion of fibres $\t L_{a_1}\subset \t V_{a_1}$
and similarly at the point $b_1$. So we can identify
the fibres of $\t L$ at these points as subspaces of
the fibres of $\t V$ at the corresponding points. Now
the fact that we have a map of generalized parabolic
bundles means that we have a commutative diagram
$$
\xymatrix{
  F_i(\t L) \ar[d]_{} \ar[r]^{} & \t L_{a_i}\oplus \t L_{b_i} \ar[d]^{} \\
  F_i(\t V) \ar[r]^{} & \t V_{a_i}\oplus \t V_{b_i}   }
$$
commutes and $\t L_{a_i}\subset \t V_{a_i}$ etc. But this is not
possible: as $F_i(\t L)$ is the graph of $\lambda_i$ and $F_i(\t
V)$ is the graph of $A_i$, so the commutativity forces $\lambda_i$
to be an eigenvalue of $A_i$ and  as $\lambda_i$ is not an
eigenvalue of $A_i$, by choice, so we have a contradiction. This
proves the assertion.
\end{proof}
\begin{corollary}
Let $V$ be a semistable bundle of degree zero on the
Castelnuovo curve $X$ of genus $g\geq 2$. Then $V$
has a theta divisor, that is there exists a line
bundle on $X$, of degree $1-g$, such that
$\hom(L,V)=0$.
\end{corollary}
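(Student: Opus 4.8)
The plan is to deduce the Corollary directly from Theorem~\ref{main4}, transporting every ingredient through the correspondence of \S\ref{gpbdescent} between torsion-free sheaves on $X$ and generalized parabolic bundles on $\t X$. First I would invoke the theorem at the end of \S\ref{gpbdescent}: since $V$ is a vector bundle on $X$, it arises from a GPB $\gpb V$ of type $B$ on $\t X$, unique up to isomorphism. Comparing Euler characteristics across the defining sequence
\[
0\to V\to\pi_*(\t V)\to\bigoplus_{i=1}^g \frac{\t V_{a_i}\oplus\t V_{b_i}}{F_i(\t V)}\to 0,
\]
in which each quotient has dimension $\rk(\t V)$ because $\gpb V$ is of type $B$, shows $\deg(\t V)=\deg(V)=0$; this is just the degree-preservation recorded in the Introduction. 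Likewise the correspondence preserves semistability, so $\gpb V$ is a semistable GPB of type $B$ with $\deg(\t V)=0$. These are precisely the hypotheses of Theorem~\ref{main4}.

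Next I would apply Theorem~\ref{main4} to obtain a GPB $\gpb L$ of type $B$ with $\rk(\t L)=1$ and $\deg(\t L)=1-g$ satisfying $\hom(\gpb L,\gpb V)=0$. Because $\gpb L$ is of type $B$ and of rank one, it descends to a genuine line bundle $L$ on $X$, and the same Euler-characteristic comparison gives $\deg(L)=\deg(\t L)=1-g$. Thus $L$ is a line bundle of exactly the degree demanded in the statement.

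The crux — and the step I expect to require the most care — is the identity $\hom(L,V)=\hom(\gpb L,\gpb V)$, since the paper has so far asserted only the \emph{object}-level bijection together with preservation of degree and semistability. I would therefore promote the correspondence to a \emph{functorial} statement: the assignment $\gpb V\mapsto V$ is the object part of an equivalence between GPBs on $\t X$ and torsion-free sheaves on $X$, and hence induces bijections on Hom-sets (this is exactly the content of \cite{bhosle92}). The easy direction is that a GPB morphism $\gpb L\to\gpb V$ pushes forward to $\pi_*\t L\to\pi_*\t V$ and, by compatibility with the $F_i$, carries the subsheaf $L$ into $V$, yielding $L\to V$. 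The delicate direction — and the one actually needed here — is the reverse: a nonzero sheaf map $L\to V$ must be shown to lift to a nonzero GPB morphism $\gpb L\to\gpb V$ respecting the parabolic data at the $\{a_i,b_i\}$, which is where the categorical equivalence does the real work.

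Granting this equivalence, the proof closes immediately: were $\hom(L,V)\neq 0$, a nonzero $L\to V$ would lift to a nonzero element of $\hom(\gpb L,\gpb V)$, contradicting Theorem~\ref{main4}. Hence $\hom(L,V)=0$ for a line bundle $L$ of degree $1-g$, which is exactly the assertion that $V$ has a theta divisor.
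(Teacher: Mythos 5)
Your proposal is correct and follows essentially the same route as the paper: both pass to the type-$B$ GPB $\gpb{V}$, apply Theorem~\ref{main4} to obtain $\gpb{L}$, and reduce the whole statement to lifting a nonzero sheaf map $L\to V$ to a nonzero GPB morphism $\gpb{L}\to\gpb{V}$. Where you defer that lift to the categorical equivalence in \cite{bhosle92}, the paper carries it out directly by pulling back $f$ along $\pi$ (using that $L$ and $V$ are locally free, so $\pi^*(L)\cong\t L$ and $\pi^*(V)\cong\t V$, with the gluing data over the nodes supplying compatibility with the $F_i$) --- the same mechanism, just made explicit rather than cited.
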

\begin{proof} Let $\gpb V$ be the GPB bundle of type $B$ on $\t X$
associated to $V$ by \ref{gpbdescent}. Then by the theorem, there
exists $\gpb L$ such that $$\hom(\gpb L,\gpb V)=0.$$. We claim
that there is a natural map
$$\Hom(\gpb L,\gpb V)\to \hom(L,V).$$
To prove this we observe that we have a commutative
diagram with exact rows
 $$
\xymatrix{
  0 \ar[r] & L \ar[r]\ar[d] & \pi_*(\t L) \ar[r]\ar[d] & \oplus_i \frac{\t L_{a_i}\oplus \t L_{b_i}}{F_i(\t L)} \ar[r]\ar[d] & 0\\
 0 \ar[r] & V \ar[r] & \pi_*(\t V) \ar[r] & \oplus_i \frac{\t V_{a_i}\oplus \t V_{b_i}}{F_i(\t L)} \ar[r] & 0
}
$$
In the diagram, the right most vertical arrow is the direct sum of
the arrows, one for each $i$, for $1\leq i\leq g$. The arrow for
each $i$ is given using the fact that the map $F_i(\t L)\to F_i(\t
V)$. The middle arrow is the push-forward of the arrow $\t L\to \t
V$ underlying the map $\gpb L\to \gpb V$. Thus we have constructed
the arrow $L\to V$ given a map of $\gpb L\to \gpb V$.

We claim that $\hom(\gpb L,\gpb V)\to \hom(L,V)$ is onto. So
suppose that we have a map $f:L\to V$ we have to construct a
corresponding map of $\gpb L\to \gpb V$ which gives rise to it in
the above fashion. Pulling back the map $f$ by $\pi$, we get a
$\pi^*(f):\pi^*(L)\to \pi^*(V)$ and noting that $\pi^*(L)=\t L$
and $\pi^*(V)=\t V$, we deduce that we have a map $$\t f:\t L\to
\t V,$$ Now we have to verify that we also have a map of parabolic
data at the points $a_i,b_i$ for $1\leq i \leq g$. But as the map
$\t f$ which we just constructed is pulled back from $X$, it comes
equipped with gluing data at the $a_i,b_i$ for $1\leq i\leq g$,
which gives maps of the parabolic data at the points over the
nodes of $X$. Further as $L,V$, are locally free sheaves on $X$,
by \cite{bhosle92}, we know that $L,V$ arise from unique
generalized parabolic bundles on $\t X$ of type $B$. Thus, we see
that we have constructed a non-zero map of generalized parabolic
bundles $\gpb L \to \gpb V$, and this is a contradiction.

Thus we have completed the proof of the corollary.
\end{proof}
\subsection{} Now we are ready to complete the proof of Theorem~\ref{main1}. We recall what we have to prove:

\begin{theorem}\label{main5} Suppose $\X\to\spec(k[[t]])$
is flat, proper family of curves with smooth generic
fibre $\X_\eta$ and the special fibre $\X_0=X$ is a
Castelnuovo curve of arithmetic genus $g\geq 2$.
Suppose $V$ is a semistable vector bundle on $X_\eta$ of
degree zero which extends as a semistable vector
bundle on $\X_0$. Then $V$ has a theta divisor.
\end{theorem}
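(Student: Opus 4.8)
The plan is to deduce this from the special-fibre statement (the preceding Corollary, equivalently Theorem~\ref{main2}) by a semicontinuity argument over the base $S=\spec(k[[t]])$. Write $\mathcal V$ for the given semistable extension of $V$ across the family, so that $V_0:=\mathcal V|_{\X_0}$ is a semistable vector bundle of degree zero on the Castelnuovo curve $X$. By the Corollary already proved, $V_0$ has a theta divisor: there is a line bundle $L_0$ of degree $1-g$ on $X$ with $\hom(L_0,V_0)=0$. The whole point is to transport this single nonvanishing to the generic fibre.

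First I would introduce the relative compactified Picard scheme $P:=\overline{\Pic}^{\,1-g}(\X/S)\to S$, which is proper and flat over $S$ because the fibres $\X_s$ are integral curves. Its generic fibre is $\Pic^{1-g}(\X_\eta)$, an ordinary Jacobian torsor, since on the smooth curve $\X_\eta$ every rank-one torsion-free sheaf is a line bundle; its special fibre is the compactified Jacobian of $X$, whose locus of honest line bundles is exactly the one parametrising rank-one GPB's of type $B$. On $\X\times_S P$ I would choose, étale-locally on $P$, a universal rank-one sheaf $\mathcal P$ and form the function $\bar L\mapsto \dim_{\kappa(\bar L)}\hom(\mathcal P_{\bar L},\mathcal V_{\pi(\bar L)})$ on $P$. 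This is independent of the choice of $\mathcal P$: two choices differ by a line bundle pulled back from $P$, whose fibre at $\bar L$ is one-dimensional and so does not change the dimension of the $\hom$-space. Hence the function is well defined, upper semicontinuous, and its vanishing locus $U\subset P$ is open.

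Next I would observe that $U$ is non-empty, since the point $[L_0]$ of the special fibre lies in it by construction. The remaining argument is then purely topological. Because $\X/S$ degenerates smooth curves to an integral curve, $P$ is irreducible: both fibres are irreducible (an abelian-variety torsor, respectively the compactified Jacobian of an integral curve), and $P$ is flat over the DVR $S$, so no component of $P$ is supported on the closed fibre; flatness also makes the generic fibre $P_\eta=P\setminus P_0$ dense in $P$. Two non-empty open subsets of an irreducible space meet, so $U\cap P_\eta\neq\emptyset$. A point of this intersection is a line bundle $L_\eta$ of degree $1-g$ on $\X_\eta$ with $\hom(L_\eta,V)=0$; since $\chi(L_\eta^{-1}\otimes V)=0$, the locus $\{\hom(L,V)\neq 0\}$ is a proper determinantal subscheme of $\Pic^{1-g}(\X_\eta)$, i.e. a theta divisor for $V$.

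The main obstacle is not the semicontinuity, which is formal, but securing the moduli-theoretic input: one must know that the relative compactified Jacobian of this degeneration is flat with irreducible fibres, and that the point $[L_0]$ furnished by Theorem~\ref{main2} genuinely lies in $P$ rather than on an uncontrolled boundary stratum. Both hold here because $X$ is integral and $L_0$ is an honest line bundle, coming from a GPB of type $B$. I would also record an alternative that avoids compactified Jacobians altogether: since $H^2(\X_0,\O_{\X_0})=0$, the relative Picard scheme is smooth over $S$ at $[L_0]$, so after a finite base change $L_0$ extends to a line bundle $\mathcal L$ on the whole family, and ordinary semicontinuity over the henselian base $S$ forces $\hom(\mathcal L|_{\X_\eta},V)=0$, because the only open subset of $\spec(k[[t]])$ containing the closed point is everything. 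I would present the first argument and remark that the second yields the same conclusion.
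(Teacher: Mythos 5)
Your proposal is correct, but note that your primary argument is genuinely different from (and considerably heavier than) the paper's, while the ``alternative'' you relegate to a closing remark \emph{is} the paper's entire proof: since $\Ext^2(\O_X,\O_X)=H^2(X,\O_X)=0$, the line bundle $L$ of degree $1-g$ produced by Theorem~\ref{main2} is unobstructed, hence extends to a line bundle on the family --- and in fact no finite base change is needed, since the Picard scheme is smooth over the henselian base $S=\spec(k[[t]])$ and the $k$-point $[L]$ of the closed fibre therefore lifts to a section --- after which semicontinuity for $\mathcal{L}^{\vee}\otimes\mathcal{V}$ over the two-point base $S$ gives $\hom(L_\eta,V_\eta)=0$ exactly as you say. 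Your main route, through the relative compactified Jacobian $P=\overline{\Pic}^{1-g}(\X/S)$, is sound in outline but trades this short deformation argument for moduli-theoretic inputs that you assert rather than secure: existence and properness of $P$ (Altman--Kleiman), irreducibility of the compactified Jacobian of an integral curve with planar singularities (Altman--Iarrobino--Kleiman), flatness of $P/S$ (true for nodal degenerations but genuinely in need of a citation, as you yourself note), and upper semicontinuity of $\bar L\mapsto\dim\hom(\mathcal{P}_{\bar L},\mathcal{V})$ when $\mathcal{P}_{\bar L}$ is torsion-free but not locally free, which requires semicontinuity for relative $\Ext$ sheaves rather than plain cohomological semicontinuity of a single bundle. (Incidentally, irreducibility of the special fibre of $P$ is not needed in your topological step: flatness alone rules out components supported over the closed point, so $P=\overline{P_\eta}$ is irreducible because $P_\eta$ is.) What your route buys is robustness --- it needs only \emph{one} point of the special fibre of $P$ in the vanishing locus and would survive even if the particular $L$ failed to deform, and it adapts to settings where the boundary of the Jacobian matters --- but for the statement at hand the deformation-theoretic shortcut is available, self-contained, and is what the paper does; you would do better to lead with it and keep the compactified Jacobian as the remark.
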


\begin{proof}[Proof of Theorem~\ref{main5}]
Now we are ready to prove Theorem~\ref{main5}. Suppose
$\X\to\spec(k[[t]])$ is a flat family of curves with smooth,
projective generic fibre of genus $g$, denoted $\X_\eta$ and the
special fibre $\X_0=X$ is a Castelnuovo curve of genus $g$.
Suppose $V_\eta$ is a vector bundle of degree zero on $\X_\eta$
which extends to a vector bundle $V$ on $X$. By
Theorem~\ref{main2}, there exists a line bundle $L$ on $X$, of
degree $1-g$ on $X$ such that $\hom(L,V)=0$. As
$\Ext^2(\O_X,\O_X)=0$, $L$ extends to a line bundle $L_\eta$ on
$\X_\eta$ and by the semi-continuity theorem we see that
$\hom(L_\eta,V_\eta)=0$ as $\hom(L,V)=0$. This proves the theorem.
\end{proof}
\subsection{} Now Corollary~\ref{DM} can also be established.
\begin{corollary}\label{DM2}
Let $X/k$ be a general smooth, projective curve of
genus $g\geq 2$. Let $V$ be a semistable vector bundle on $X$
which extends to a semistable vector bundle on some
Castelnuovo degeneration of $X$. Then $V$ admits a
theta divisor.
\end{corollary}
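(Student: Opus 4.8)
The plan is to reduce the statement directly to Theorem~\ref{main5} (equivalently Theorem~\ref{main1}), which already contains all the substantive work. The one external input is \cite{deligne69}: a general smooth, projective curve of genus $g\geq 2$ is the smooth generic fibre of a flat, proper one-parameter degeneration whose special fibre is a Castelnuovo curve of the same arithmetic genus. This is precisely what makes the hypothesis of the corollary non-vacuous, and it is the only place where the word \emph{general} really enters.

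Concretely, I would first extract the family from the hypothesis. Since $V$ extends to a semistable bundle on some Castelnuovo degeneration of $X$, there is a flat, proper family $\X\to\spec(k[[t]])$ with smooth generic fibre $\X_\eta$ recovering $X$ and special fibre $\X_0$ a Castelnuovo curve of genus $g$, together with an extension of $V$ to a bundle on $\X$ whose restriction $V_0$ to $\X_0$ is semistable. Because a theta divisor is only defined for a bundle of slope zero, $V$ has degree zero; hence $V$, viewed on $\X_\eta$, is a semistable vector bundle of degree zero that extends as a semistable vector bundle to the Castelnuovo special fibre $\X_0$. This is exactly the hypothesis of Theorem~\ref{main5}, so that theorem applies and produces a line bundle of degree $1-g$ on $\X_\eta$ having no nonzero map into $V$; that is, $V$ admits a theta divisor. (Recall that inside that theorem one invokes Theorem~\ref{main2} on the Castelnuovo fibre to obtain $L$ with $\hom(L,V_0)=0$, uses $\Ext^2(\O_X,\O_X)=0$ to spread $L$ out over the family, and then applies semicontinuity to propagate the vanishing to the generic fibre.)

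The only point requiring genuine care, and hence the step I expect to be the main obstacle, is the bookkeeping that translates the moduli-theoretic existence statement of \cite{deligne69} into the concrete one-parameter family over the complete discrete valuation ring $k[[t]]$ demanded by Theorem~\ref{main5}, and that arranges the prescribed general curve $X$ to appear as the generic fibre $\X_\eta$ (with its attendant change of base field from $k$ to $k((t))$). One must also keep the notation straight, since the symbol $X$ denotes the general smooth curve in the corollary but the Castelnuovo special fibre in Theorems~\ref{main2} and~\ref{main5}. Once the family and the semistable extension of $V$ are in place, there is no further content: the corollary is an immediate application of Theorem~\ref{main5}.
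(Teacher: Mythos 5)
Your proposal is correct and follows essentially the same route as the paper: invoke \cite{deligne69} to realize the Castelnuovo degeneration as a family over $\spec(k[[t]])$ and then apply Theorem~\ref{main5} directly. Your extra remarks on the degree-zero normalization and the base-change bookkeeping are sound but not needed beyond what the paper's own two-line proof records.
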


\begin{proof}[Proof of Corollary~\ref{DM2}]
By \cite{deligne69} we know that a general curve of genus $g\geq
2$ admits a Castelnuovo degeneration and indeed many such
degenerations. Thus if $V$ is a vector bundle which extends to
some Castelnuovo degeneration, then Theorem~\ref{main5} applies
and we are done.
\end{proof}

\begin{thebibliography}{Ray82}

\bibitem[Bho92]{bhosle92}
Usha Bhosle, \emph{Generalised parabolic bundles and applications to
  torsionfree sheaves on nodal curves}, Ark. Mat. \textbf{30} (1992), no.~2,
  187--215.

\bibitem[Bho96]{bhosle96}
Usha~N. Bhosle, \emph{Generalized parabolic bundles and applications. {II}},
  Proc. Indian Acad. Sci. Math. Sci. \textbf{106} (1996), no.~4, 403--420.

\bibitem[DM69]{deligne69}
P.~Deligne and D.~Mumford, \emph{The irreducibility of the space of curves of
  given genus}, Inst. Hautes \'Etudes Sci. Publ. Math. (1969), no.~36, 75--109.

\bibitem[JM]{joshi06b}
Kirti Joshi and V.~B. {M}ehta, \emph{Vectors bundles with theta divisors {II}:
  Vector bundles on nodal curves}, Under preparation.

\bibitem[Ray82]{raynaud82}
Michel Raynaud, \emph{Sections des fibr\'es vectoriels sur une courbe}, Bull.
  Soc. Math. France \textbf{110} (1982), no.~1, 103--125.

\bibitem[Ses82]{seshadri82}
C.~S. Seshadri, \emph{Fibr\'es vectoriels sur les courbes alg\'ebriques},
  Ast\'erisque, vol.~96, Soci\'et\'e Math\'ematique de France, Paris, 1982,
  Notes written by J.-M. Drezet from a course at the \'Ecole Normale
  Sup\'erieure, June 1980.

\end{thebibliography}
\providecommand{\bysame}{\leavevmode\hbox to3em{\hrulefill}\thinspace}
\providecommand{\MR}{\relax\ifhmode\unskip\space\fi MR }
\providecommand{\MRhref}[2]{%
  \href{http://www.ams.org/mathscinet-getitem?mr=#1}{#2}
}
\providecommand{\href}[2]{#2}

\end{document}